\documentclass[graybox]{svmult}
\pdfoutput=1  


\usepackage{type1cm}        
%
\usepackage{makeidx}         
\usepackage{graphicx}        
\usepackage{multicol}        
\usepackage[bottom]{footmisc}

\usepackage{newtxtext}       %
\usepackage{newtxmath}       

\usepackage{wrapfig}
\usepackage{mathtools}
\newcommand{\R}{\mathbb{R}}
\newcommand{\T}{\mathcal{T}}
\DeclareMathOperator{\conv}{conv}

\graphicspath{{./graphics}}



\begin{document}

\title*{Construction of 4D Simplex Space-Time Meshes for Local Bisection Schemes}
\author{David Lenz}
\institute{David Lenz \at Argonne National Laboratory, Lemont, IL, USA, \email{dlenz@anl.gov}
}
%
%
\maketitle

\abstract*{Adaptive mesh refinement is a key component of efficient unstructured space-time finite element methods. Underlying any adaptive mesh refinement scheme is, of course, a method for local refinement of simplices. However, simplex bisection algorithms in dimension greater than three have strict mesh preconditions which can be hard to satisfy. We prove that certain four-dimensional simplex space-time meshes can be handled with a relaxed precondition. Namely, we prove that if a tetrahedral mesh is 4-colorable, then we can produce a 4D simplex mesh which always satisfies the bisection precondition. We also briefly discuss strategies to handle tetrahedral meshes which are not 4-colorable.}

\section{Introduction}
\label{sec:intro}
Space-time finite element methods (FEMs) approximate the solution to a PDE `all-at-once' in the sense that a solution is produced at all times of interest simultaneously. This is achieved by treating time as just another variable and discretizing the entire space-time domain with finite elements. Naturally, discretizing the entire space-time domain creates a linear system with many more degrees of freedom (DOFs) than discretizing just the spatial domain. 
Adaptive mesh refinement can produce space-time discretizations that yield accurate solutions with relatively few degrees of freedom. For example, Langer \& Schafelner~\cite{langer2020} compared space-time FEMs using uniformly  and adaptively refined meshes; they found that obtaining the same approximation error for their tests required more degrees of freedom on the uniform meshes by one to two orders of magnitude.

A technical challenge in the implementation of adaptive mesh refinement is, of course, the mesh refinement scheme. In order to refine a geometric element while introducing as few new DOFs as possible, algorithms typically employ a ``red-green'' approach (uniform refinement with closure) or an element bisection approach. Here we focus on bisection schemes in four dimensions but note that work on arbitrary-dimensional red-green schemes was recently undertaken by Grande~\cite{grande2019}. Stevenson~\cite{stevenson2008} has studied a bisection algorithm for arbitrary-dimensional simplicial meshes, which is a good candidate for four-dimensional space-time meshes. However, the algorithm relies on a mesh precondition that is difficult to satisfy.

In this article, we weaken this strict precondition for certain space-time meshes. We prove that the precondition on four-dimensional simplex meshes can be reduced to a precondition on an underlying three-dimensional mesh. This means that the condition needs only be checked on a much smaller mesh. In addition, if one cannot immediately verify the precondition, refinements to this smaller three-dimensional mesh can be made to automatically satisfy the precondition.

In Section 2, we describe a particular method for creating four-dimensional space-time meshes. Meshes of this type have a  structure which will be exploited in Section 3, where we summarize key concepts from Stevenson's bisection algorithm and then prove our main result. Finally, we conclude with some remarks on how this relaxed precondition can be used in practice.

\section{Four-Dimensional Space-Time Mesh Construction}
\label{sec:mesh-construction}
When applying space-time FEMs to solve a PDE, it is generally necessary to create a space-time mesh that corresponds to a given spatial domain. A convenient method for doing so is to repeatedly extrude spatial elements (typically triangles or tetrahedra) into higher-dimensional space-time prisms and then subdivide these prisms into simplicial space-time elements (tetrahedra or pentatopes). We refer to mesh generation methods of this type as \emph{extrusion-subdivision} schemes. 

The method of extrusion-subdivision has appeared in several places in recent years. This idea was applied to moving meshes by Karabelas \& Neum\"{u}ller~\cite{karabelas2015} and is discussed in a report by Voronin~\cite{voronin2018}, where it is described in the context of an extension to the MFEM library~\cite{mfem-library}. For stationary (non-moving) domains, the extrusion step is straightforward, but subdividing the space-time prisms can be done in several ways. Behr~\cite{behr2008} describes a method for subdividing space-time prisms using Delaunay triangulations, while subdivision based on vertex orderings is used in \cite{karabelas2015}, \cite{voronin2018}.

In this paper, we consider space-time meshes produced by extrusion-subdivision where prism subdivision is defined in terms of vertex labels. In particular, we will assume that a $k$-coloring has been imposed on the mesh; that is, each vertex in the mesh has one of $k$ labels (colors) attached to it and no two vertices connected by an edge share the same label. 

Before we describe our particular prism subdivision method, we need to establish some notation. Suppose $\vec{a} = (a_0, a_1, \ldots, a_{d-1}) \in \R^d$. For any $r \in \R$, we define the map $\psi_r: \R^d \to \R^{d+1}$ by
\begin{equation}
\psi_r(\vec{a}) = (a_0, a_1, \ldots, a_{d-1}, r).
\end{equation}
For a simplex $T = \conv(\vec{a}, \vec{b}, \vec{c}, \vec{d})$ (here $\vec{a}, \vec{b}, \vec{c}, \vec{d} \in \R^d$), we define
\begin{equation}
    \psi_r(T) = \conv(\psi_r(\vec{a}), \psi_r(\vec{b}), \psi_r(\vec{c}), \psi_r(\vec{d}));
\end{equation}
that is, $\psi_r(T)$ is the embedding of $T$ into $\R^{d+1}$ within the plane $x_{d+1} = r$.

Next, the extrusion operator $\Phi_{r,s}$ produces the set of all points between $\psi_r(T)$ and $\psi_s(T)$. This is the convex hull of points in $\psi_s(T)$ and $\psi_r(T)$, which is a right tetrahedral prism:
\begin{equation}
    \Phi_{r,s}(T) = \conv(\psi_r(T), \psi_s(T)).
\end{equation}
We make one final notational definition to declutter the following exposition. Given a series of real values $\mathcal{S} = \{s_0, \ldots, s_M\}$, we define
\begin{equation}\label{eq:time-slice}
\Phi^\mathcal{S}_i(T) = \Phi_{s_i, s_{i+1}}(T) \quad \text{where } s_j \in \mathcal{S}, \text{ for } 0 \leq j \leq M-1.
\end{equation}
We refer to $\mathcal{S}$ as a collection of ``time-slices,'' which determine how spatial elements are extruded into space-time prisms. The value $s_j$ is thus the ``$j^{th}$ time-slice.'' For most problems, the set of initial time-slices is fixed ahead of time, so it is often convenient to omit the superscript $\mathcal{S}$. We adopt this shorthand for the remainder of this paper. For an illustration of the operators $\psi$ and $\Phi$ in two spatial dimensions, see Figure \ref{fig:extrusion}.

\begin{figure}
    \centering
    \includegraphics[width=.7\textwidth]{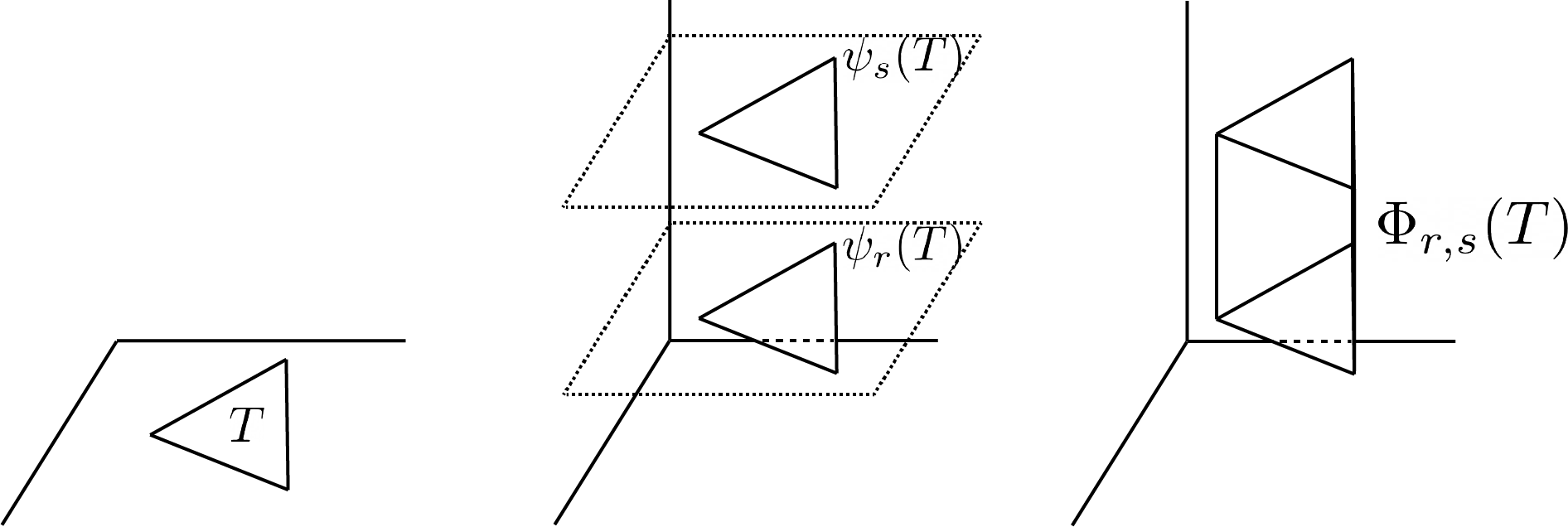}
    \caption{Extrusion of a 2D simplex into a 3D simplex prism. Left: The spatial element. Center: Copies of the spatial element embedded in space-time. Right: The space-time prism element.}
    \label{fig:extrusion}
\end{figure}

For the remainder of this article, we will focus on the case $d=3$; that is, problems in four-dimensional space-time.\footnote{
    Due to the inherent difficulty in visualizing four-dimensional objects on a two-dimensional page, we will continue to illustrate figures in three-dimensional space-time. These figures are meant only as a guide for the reader to develop some geometric intuition.
}
Let $\T$ be a conforming tetrahedral mesh with a 4-coloring,\footnote{
    Not every tetrahedral mesh admits a 4-coloring, although all are 5-colorable. We discuss how to handle meshes which are not 4-colorable at the end of Section 3.
}
and let the symbols $A, B, C, D$ denote the four labels to be associated with each vertex of $\T$. Let $\mathcal{S}$ be a set of time-slices which define the extrusion of spatial elements into space-time prisms (c.f. equation \ref{eq:time-slice}). We will use the following rule to subdivide the tetrahedral prisms formed by extruding elements of $\T$.

\begin{figure}
    \sidecaption
    \includegraphics[width=.25\textwidth]{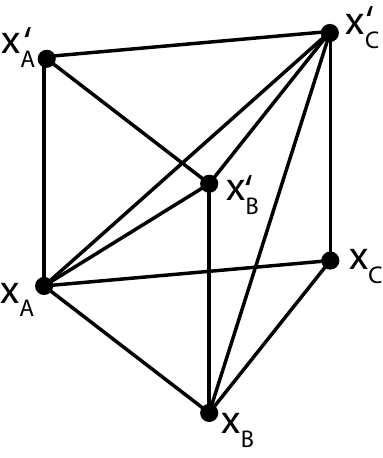}
    \caption{Space-time triangular prism subdivision based on a coloring of the underlying spatial mesh.}
    \label{fig:subdivision}
\end{figure}

\begin{definition}[Subdivision Rule]\label{def:subdivision-rule}
Let $T \in \T$ be a tetrahedron with vertices $v_A, v_B, v_C, v_D$, where the subscript of each vertex denotes its color label. For a given time slice $s_i \in \mathcal{S}$, let $x_A = \psi_{s_i}(v_A)$ and $x_A' = \psi_{s_{i+1}(v_A)}$ (and likewise for B, C, D). The rule for subdividing $\Phi_i(T)$ is to create the pentatopes:
\begin{equation}
    \begin{aligned}
    \tau_1 &= \conv(x_A, x_B, x_C, x_D, x_D')\\
    \tau_2 &= \conv(x_A, x_B, x_C, x_C', x_D')\\
    \tau_3 &= \conv(x_A, x_B, x_B', x_C', x_D')\\
    \tau_4 &= \conv(x_A, x_A', x_B', x_C', x_D')
    \end{aligned}
\end{equation}
\end{definition}
An illustration of this subdivision rule is given in figure \ref{fig:subdivision}. Because the labeling of each vertex is shared by all elements, this subdivision scheme always produces a conforming mesh of pentatopes. However, we omit the detailed proof because of space constraints.
 
\section{Conforming Bisection of Space-Time Simplicial Elements}
The aim of this section is to outline Stevenson's bisection algorithm~\cite{stevenson2008}\footnote{
    The bisection rule studied by Stevenson has  also been studied by Maubach~\cite{maubach1995} and Traxler~\cite{traxler1997}.
}, 
with particular attention on the mesh precondition for its validity. Then, we show that space-time meshes produced by extrusion-subdivision (using definition \ref{def:subdivision-rule}) will always meet this precondition if the underlying spatial mesh is 4-colorable. The upshot of this is that all of the work to make an admissible mesh can be done in three dimensions instead of four. This is especially useful in light of the fact that there are many more meshing utilities for three-dimensional domains than four-dimensional domains.

The following definitions are due to Stevenson~\cite{stevenson2008}.

\begin{definition}\label{def:tagged-simplex}
A \emph{tagged pentatope} $t$ is an ordering of the vertices of some pentatope $\tau = \conv(x_0, x_1,x_2, x_3, x_4)$, together with an integer $0 \leq \gamma \leq 3$ called the \emph{type}. We write
\begin{equation}
    t = (x_0, x_1, x_2, x_3, x_4)_\gamma
\end{equation}
to denote this ordering-type pair.
\end{definition}

\begin{definition}\label{def:reflection}
The \emph{reflection} of a tagged pentatope $t$ is another tagged pentatope $t_R$ such that the bisection rule produces the same child pentatopes for $t$ and $t_R$. The unique reflection of $t = (x_0, x_1, x_2, x_3, x_4)_\gamma$ is 
\begin{equation}
t_R = \begin{dcases}
(x_4, x_3, x_2, x_1, x_0)_\gamma &\text{if } \gamma = 0\\
(x_4, x_1, x_3, x_2, x_0)_\gamma &\text{if } \gamma = 1\\
(x_4, x_1, x_2, x_3, x_0)_\gamma &\text{if } \gamma = 2,3
\end{dcases}.
\end{equation}
\end{definition}

\begin{definition}\label{def:reflected-neighbors}
Two tagged pentatopes $t$ and $t'$ are \emph{reflected neighbors} if they share a common hyperface, have the same integer type, and the vertex order of $t'$ matches the vertex order of $t$ or $t_R$ in all but one position.
\end{definition}

The notion of reflected neighbors is critical to our proof of the main result, so it is worth illustrating the concept with some examples.
\begin{example}
    Let $t = (x_0, x_1, x_2, x_3, y)_1$ and $t' = (z, x_0, x_1, x_2, x_3)_1$. Then $t$ and $t'$ are NOT reflected neighbors. Although the relative ordering of their shared vertices is consistent, the taggings differ in every position.
\end{example}
\begin{example}
    Let $t = (x_0, z, x_1, x_2, x_3)_1$ and $t' = (x_3, y, x_2, x_1, x_0)_1$. Then $t$ and $t'$ are reflected neighbors. To see this, we note that $t_R = (x_3, z, x_2, x_1, x_0)_1$; thus $t_R$ and $t'$ differ only in the second position. Likewise, we could have shown that $t_R'$ and $t$ differ on at most one position.
\end{example}

\begin{definition}\label{def:consistent-tagging}
The tagging of two pentatopes $t = (x_0, x_1, x_2, x_3, x_4)_\gamma$ and $t' = (x_0', x_1', x_2', x_3', x_4')_\gamma$ which share a hyperface is said to be \emph{consistent} if the following condition is met:
\begin{enumerate}
\item If $\overline{x_0,x_4}$ or $\overline{x_0', x_4'}$ is contained in the shared hyperface, then $t$ and $t'$ are reflected neighbors (N.B. these are the bisection edge for each element; see definition \ref{def:bisection}).
\item Otherwise, the two children of $t$ and $t'$ which share the common hyperface are reflected neighbors.
\end{enumerate}
A \emph{consistent tagging of a mesh} is a tag for each element in the mesh such that any two neighboring elements are consistently tagged.
\end{definition}

In essence, Definition \ref{def:consistent-tagging} states that in a consistently tagged mesh, any pair of neighboring elements are either reflected neighbors or they do not share a common refinement edge. Furthermore, when two elements do not share a common refinement edge, their adjacent children will be reflected neighbors after one round of bisection.

\begin{definition}[Bisection Rule]\label{def:bisection}
Given a tagged pentatope $t = (x_0, x_1, x_2, x_3, x_4)_\gamma$, applying the bisection rule produces the children:
\begin{equation}
    t_1 = (x_0, x', x_1, x_2, x_3)_{\gamma'}
    \qquad 
    t_2 =
    \begin{dcases}
        (x_4, x', x_3, x_2, x_1)_{\gamma'} &\text{if } \gamma = 0\\
        (x_4, x', x_1, x_3, x_2)_{\gamma'} &\text{if } \gamma = 1\\
        (x_4, x', x_1, x_2, x_3)_{\gamma'} &\text{if } \gamma = 2,3
    \end{dcases},
\end{equation}
where $x' = (x_0 + x_4)/2$ and $\gamma' = (\gamma + 1)\mod{4}$. We say that edge $\overline{x_0x_4}$ is the \emph{refinement edge}.
\end{definition}

We can now state the main result of this section.

\begin{proposition}\label{prop:consistent-tagging}
Let $\mathcal{T} \subset \R^3$ be a 4-colorable tetrahedral mesh and $\mathcal{T}' \subset \R^4$ be the pentatope mesh produced by extrusion-subdivision according to definition \ref{def:subdivision-rule}. Then $\mathcal{T}'$ admits a consistent tagging.
\end{proposition}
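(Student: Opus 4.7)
The plan is to exhibit an explicit tagging and verify the consistency condition by a case analysis on hyperface-sharing pairs. I would tag every pentatope produced by Definition \ref{def:subdivision-rule} with type $\gamma = 0$, preserving the vertex order in which the rule already lists them; for instance, $\tau_1$ becomes the tagged pentatope $(x_A, x_B, x_C, x_D, x_D')_0$. The crucial consequence of this choice, together with the 4-coloring of $\T$, is that the refinement edge of every pentatope in $\mathcal{T}'$ is the ``space-time diagonal'' $\overline{x_A x_D'}$ of its prism, running from the color-$A$ vertex at the lower time slice to the color-$D$ vertex at the upper time slice. This uniform refinement-edge direction is the structural fact that makes the verification tractable.

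Next, I would partition the hyperface-sharing pairs into three classes. First, within-prism neighbors $\tau_k, \tau_{k+1}$ share a hyperface containing $\overline{x_A x_D'}$; inspection shows their tagged orderings differ in exactly one position, so condition 1 of Definition \ref{def:consistent-tagging} is immediate. Second, spatial neighbors across a shared tetrahedral face of $\T$: because $\T$ is 4-colored, the non-shared vertex of each of the two tetrahedra carries the same color $c \in \{A, B, C, D\}$, and the pentatope hyperfaces meeting the common lateral prism on either side coincide vertex-for-vertex except at the color-$c$ position. I would subcase on $c$: when $c \in \{B, C\}$ the refinement edge lies in the shared hyperface and condition 1 gives reflected neighbors directly; when $c \in \{A, D\}$ the refinement edge sticks out of the shared hyperface, so I would apply the bisection rule explicitly and show via condition 2 that the unique children meeting the shared hyperface differ only at the position of the newly inserted midpoint. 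Third, temporal neighbors, where the top hyperface of $\tau_4$ in $\Phi_i(T)$ meets the bottom hyperface of $\tau_1$ in $\Phi_{i+1}(T)$: neither refinement edge lies in the shared hyperface, so I would again apply condition 2, bisect both elements (producing type-1 children), and verify reflected-neighbor status using the type-1 reflection formula in Definition \ref{def:reflection}.

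The main obstacle will be the spatial-neighbor class, since it requires enumerating, for each value of $c$, exactly which pentatopes in each prism carry a hyperface on the shared lateral prism and then matching the tagged orderings in all but one position (directly for $c \in \{B, C\}$, or after one bisection for $c \in \{A, D\}$). The 4-colorability hypothesis enters precisely here: without the guarantee that the non-shared vertices on either side of a common face carry a common color, the pentatope orderings on the two sides would fail to align in all but one position, and reflected-neighbor compatibility would break. The within-prism and temporal cases, by contrast, are essentially bookkeeping once the canonical tagging has been declared.
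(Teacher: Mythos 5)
Your proposal is correct, but it takes a genuinely different route from the paper, because you choose a different tagging. You keep the vertex order of Definition \ref{def:subdivision-rule} (so $t_1 = (x_A, x_B, x_C, x_D, x_D')_0$, etc.), which makes the refinement edge of all four pentatopes in a prism the single space-time diagonal $\overline{x_Ax_D'}$. The paper instead tags $t_1 = (x_D, x_C, x_B, x_A, x_D')_0$, $t_2 = (x_C, x_B, x_A, x_D', x_C')_0$, $t_3 = (x_B, x_A, x_D', x_C', x_B')_0$, $t_4 = (x_A, x_D', x_C', x_B', x_A')_0$, so that each pentatope's refinement edge is a distinct \emph{vertical} edge $\overline{x_Dx_D'}, \overline{x_Cx_C'}, \overline{x_Bx_B'}, \overline{x_Ax_A'}$. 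This swaps which cases are easy: with your tagging, within-prism neighbors contain the common refinement edge in their shared hyperface and condition 1 of Definition \ref{def:consistent-tagging} holds by direct inspection (no bisection), whereas the paper must bisect and compare children there; conversely, for spatial neighbors across a shared tetrahedral face, the paper's vertical refinement edge always lies in the shared lateral hyperface so condition 1 applies uniformly, while your tagging forces the subcase analysis on the missing color $c$, with an explicit bisection when $c \in \{A, D\}$. I verified your key claims: the non-shared vertices across a face do carry the same color under a 4-coloring, the hyperfaces on the lateral prism boundary pair up index-to-index from both sides, the $c \in \{B,C\}$ pairs differ in exactly one position, the $c \in \{A,D\}$ children meeting the shared hyperface differ only at the new midpoint, and the temporal case closes via the type-1 reflection formula. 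Both arguments are valid; the paper's choice buys a one-line spatial case at the cost of bisecting in the within-prism case, while yours buys a one-line within-prism case at the cost of the color subcases. Your temporal case is essentially identical to the paper's Case (2).
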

\begin{proof}
We will prove the result by constructing a consistent tagging of $\T'$ directly from a 4-coloring of $\T$. 

To show that a tagging of $\T'$ is consistent, it suffices to consider an arbitrary element of $\T'$ and show that each of its neighbors satisfy the conditions in Definition \ref{def:consistent-tagging}. Let $\tau \in \T'$ be an abitrary element. Since $\T$ is created by extrusion-subdivision, there is some time-slice $s_i$ and $T \in \T$ such that $\tau \subset \Phi_i(T)$. Since $\Phi_i(T)$ was subdivided according to definition \ref{def:subdivision-rule}, each resulting pentatope is defined explicitly by the 4-coloring on $\T$.

For the above four pentatopes in definition \ref{def:subdivision-rule}, we make the following tagging (in each case $t_i$ is a tagging of $\tau_i$):
\begin{equation}\label{eq:tagging}
\begin{aligned}
t_1 = (x_D, x_C, x_B, x_A, x_D')_0
&\qquad t_2 = (x_C, x_B, x_A, x_D', x_C')_0\\
t_3 = (x_B, x_A, x_D', x_C', x_B')_0
&\qquad t_4 = (x_A, x_D', x_C', x_B', x_A')_0
\end{aligned}
\end{equation}

We will show that each neighbor $\tau'$ satisfies the consistency condition in Definition \ref{def:consistent-tagging}. There are three cases:
\begin{enumerate}
    \item $\tau$ and $\tau'$ are both pentatopes within the same space-time prism.
    \item $\tau$ and $\tau'$ belong to different space-time prisms extruded from the same spatial element; for instance, $\tau \subset \Phi_i(T)$ and $\tau' \subset \Phi_{i+1}(T)$.
    \item $\tau$ and $\tau'$ belong to different space-time prisms within the same space-time slab; for instance, $\tau \subset \Phi_i(T)$ and $\tau' \subset \Phi_i(T')$.
\end{enumerate}

Consider Case (1). Since $\tau$ and $\tau'$ are neighbors, they must be a pair $\tau_i, \tau_{i+1}$ ($j=1,2,3$), since only consecutive pairs in our list are neighbors. In this case, the adjacent children of each pentatope are reflected neighbors. To make this point explicit, consider the neighboring elements $\tau_1, \tau_2$. The children formed by bisecting these pentatopes are:
\begin{equation}
t_1 \to
\begin{dcases}
    (x_D, z_1, x_C, x_B, x_A)_1\\
    (x_D', z_1, x_A, x_B, x_C)_1
\end{dcases}
\qquad 
t_2 \to
\begin{dcases}
    (x_C, z_2, x_B, x_A, x_D')_1\\
    (x_C', z_2, x_D', x_A, x_B)_1
\end{dcases}
\end{equation}
where $z_i$ are the new midpoints of the bisected edges.
From here we note that the second child of $t_1$ is the reflected neighbor of $t_2$. The same exercise shows that the pairs $t_2,t_3$ and $t_3,t_4$ also share this property. Thus all pairs of neighbors in Case (1) are consistently tagged.

In Case (2), $\tau$ and $\tau'$ are neighboring pentatopes belonging to different space-time slabs; without loss of generality, $\tau \subset \Phi_i(T)$ and $\tau' \subset \Phi_{i+1}(T)$ for some $T \in \T$. With two space-time prisms, we have three sets of vertices at three different time-slices. We denote vertices in the highest (latest) time hyperplane with double primes (''), vertices in the middle hyperplane with single primes ('), and vertices in the lowest (earliest) hyperplane with no primes; see Figure \ref{fig:double-extrusion}.

\begin{figure}
    \sidecaption
    \includegraphics[width=.3\textwidth]{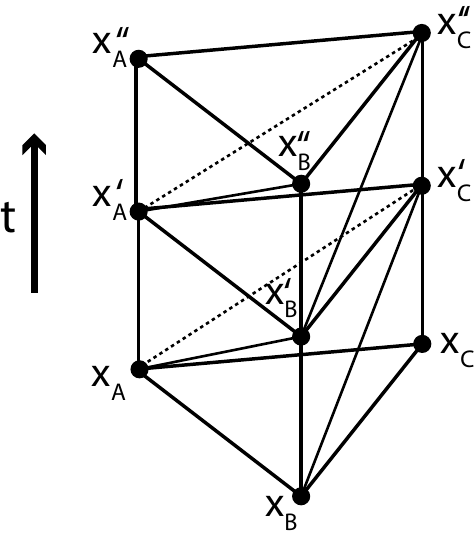}
    \caption{Illustration of vertex labeling when considering consecutive space-time time prisms in two spatial dimensions.}
    \label{fig:double-extrusion}
\end{figure}

Since $\tau$ and $\tau'$ belong to consecutive timeslices, the shared hyperface between the two must be $\conv(x_A', x_B', x_C', x_D')$. Thus $\tau = \conv(x_A, x_A', x_B', x_C', x_D')$ and $\tau' = \conv(x_A', x_B', x_C', x_D', x_D'')$. According to the tagging scheme described above, the tags on these two pentatopes are
\begin{equation}
t = (x_A, x_D', x_C', x_B', x_A')_0 \quad t' = (x_D', x_C', x_B', x_A', x_D'')_0,
\end{equation}
and thus their child elements are
\begin{equation}
t \to
\begin{dcases}
    (x_A, z, x_D', x_C', x_B')_1\\
    (x_A', z, x_B', x_C', x_D')_1
\end{dcases}
\qquad 
t' \to
\begin{dcases}
    (x_D', z', x_C', x_B', x_A')_1\\
    (x_D'', z', x_A', x_B', x_C')_1
\end{dcases}.
\end{equation}
where $z$ and $z'$ are the new vertices created by bisecting $\tau$ and $\tau'$, respectively. By inspection, the second child of $t$ and the first child of $t'$ are reflected neighbors; hence the tags $t, t'$ are consistent.

Finally, consider Case (3). Since $\T'$ is conforming, when $\tau \subset \Phi_i(T)$ and $\tau' \subset \Phi_i(T')$ they must share a vertical edge like $\overline{x_Ax_A'}$, which is always a bisection edge. Since the vertex labels are ``global'' labels, both $\tau$ and $\tau'$ must agree on the order in which the labeled vertices appear. Furthermore, $\tau$ and $\tau'$ share all but one vertex in common. Since all but one vertex is shared, both pentatopes have the same labels for the same vertices, and vertex order is uniquely determined by vertex label, the vertex orders agree on all but one position. Hence the tagged pentatopes are reflected neighbors.

Thus all three cases result in consistent taggings. Therefore, the tagging defined by equation \ref{eq:tagging} is consistent.
\end{proof}

The critical piece of this proof is the tagging scheme defined in equation \ref{eq:tagging}. This precise ordering of vertices is needed for the rest of the proof to work. Furthermore, since the vertex orders are determined by the 4-coloring on $\T$, a consistent tagging of the space-time mesh can be constructed in linear time.

\begin{corollary}
Let $\mathcal{T}$ and $\mathcal{T}'$ be as in Proposition \ref{prop:consistent-tagging}. Given a 4-coloring on $\mathcal{T}$, the space-time mesh $\mathcal{T}'$ can be consistently tagged in $O(N)$ time, where $N$ is the number of vertices in $\mathcal{T}'$.
\end{corollary}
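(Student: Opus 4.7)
The plan is to observe that the proof of Proposition~\ref{prop:consistent-tagging} is already constructive: equation~(\ref{eq:tagging}) expresses the tag of each pentatope purely in terms of the 4-coloring of its generating tetrahedron $T \in \T$ and the index $i$ of the governing time-slice. The tagging algorithm therefore consists of a single pass over the pentatopes of $\T'$; at each pentatope one looks up the colors of the four spatial base vertices, determines which of $\tau_1,\tau_2,\tau_3,\tau_4$ it represents, and writes down the ordered 5-tuple paired with type $\gamma = 0$.

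First I would argue that each such per-pentatope step costs $O(1)$: reading the four color labels, identifying which vertices lie in slice $s_i$ versus $s_{i+1}$, and assembling the ordered 5-tuple all require a bounded number of operations independent of $N$. This step presumes that the pentatopes of $\T'$ are stored with enough provenance information (base tetrahedron together with the local time-slice index) to allow these lookups in constant time, which is natural for data structures produced by extrusion-subdivision.

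Next I would bound the total number of pentatopes by $O(N)$. By construction, each prism $\Phi_i(T)$ contributes exactly four pentatopes, so the number of pentatopes in $\T'$ equals $4\,|\T|\,(|\mathcal{S}|-1)$, while the vertex count satisfies $N = n_v\,|\mathcal{S}|$, where $n_v$ is the vertex count of $\T$. Under the standard assumption that $|\T| = O(n_v)$, which holds for shape-regular tetrahedral meshes, we obtain $|\T'| = O(N)$, so the linear pass yields total cost $O(N)$. The only genuinely delicate point in the argument is this implicit regularity hypothesis on $\T$; apart from that, the analysis reduces to the observation that each pentatope is tagged in constant time by a direct lookup into (\ref{eq:tagging}).
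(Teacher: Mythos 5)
Your proposal is correct and matches the paper's approach: the paper gives no separate proof of this corollary, relying only on the preceding remark that the tagging in equation~\ref{eq:tagging} is determined directly by the 4-coloring, which is exactly the constant-time-per-pentatope argument you spell out. Your observation that one implicitly needs the number of tetrahedra of $\T$ to be $O(n_v)$ (so that the pentatope count is $O(N)$) is a fair point that the paper leaves unstated, but it does not change the substance of the argument.
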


Not every tetrahedral mesh is 4-colorable, but this can be worked around. First, we note that regular tetrahedral meshes are 4-colorable, so for rectilinear domains one can start with a coarse regular mesh and bisect until a desired resolution is met. In addition, Traxler~\cite{traxler1997} has shown (indirectly) that tetrahedral meshes over simply connected domains are 4-colorable iff every edge is incident to an even number of tetrahedra.

Finally, any tetrahedral mesh can be made 4-colorable by dividing each element via barycentric subdivision and then choosing the following colors: each vertex of the original mesh is colored A, the new center of each edge is colored B, the new center of each face is colored C, and the new center of each tetrahedron is colored D. Barycentric subdivision creates new elements with one of each kind of point, which means that this is indeed a 4-coloring.

\section{Conclusions}
We described a method for creating four-dimensional simplex space-time meshes from a given spatial mesh which has a 4-coloring. This procedure was based on the general extrusion-subdivision framework, with a new subdivision rule which is defined in terms of vertex labels (colors). We then proved that meshes of this form always satisfy the strict precondition of Stevenson's bisection algorithm, which can be used to adaptively refine space-time meshes. Finally, we showed that even when a tetrahedral mesh is not 4-colorable, the barycentric subdivision of the mesh will be.

\begin{acknowledgement}
    This work is supported by the
    U.S. Department of Energy, Office of Science, Advanced Scientific Computing Research under Contract DE-AC02-06CH11357, and the Exascale
    Computing Project (Contract No. 17-SC-20-SC), a collaborative effort of the U.S. Department of Energy Office of Science and the National Nuclear Security Administration.
\end{acknowledgement}

\bibliographystyle{spmpsci}
\bibliography{stbisect}

\pagebreak
The submitted manuscript has been created by UChicago Argonne, LLC, Operator of Argonne National Laboratory ("Argonne”). Argonne, a U.S. Department of Energy Office of Science laboratory, is operated under Contract No. DE-AC02-06CH11357. The U.S. Government retains for itself, and others acting on its behalf, a paid-up nonexclusive, irrevocable worldwide license in said article to reproduce, prepare derivative works, distribute copies to the public, and perform publicly and display publicly, by or on behalf of the Government. The Department of Energy will provide public access to these results of federally sponsored research in accordance with the DOE Public Access Plan (\url{http://energy.gov/downloads/doe-public-access-plan}).
\end{document}